\setlist[enumerate,1]{label={(\alph*)}}
\newcommand{\addresseshere}{\enddoc@text\let\enddoc@text\relax}
\DeclareMathOperator{\bun}{Bun}
\DeclareMathOperator{\op}{op}
\DeclareMathOperator{\re}{Re}
\DeclareMathOperator{\CCS}{CCS}
\DeclareMathOperator{\Aut}{Aut}
\DeclareMathOperator{\dr}{dR}
\DeclareMathOperator{\ind}{ind}
\DeclareMathOperator{\CS}{CS}
\DeclareMathOperator{\odd}{odd}
\DeclareMathOperator{\even}{even}
\DeclareMathOperator{\im}{Im}
\DeclareMathOperator{\id}{id}
\DeclareMathOperator{\ch}{ch}
\DeclareMathOperator{\End}{End}
\DeclareMathOperator{\rk}{rank}
\DeclareMathOperator{\str}{str}
\begin{document}
\setlength{\baselineskip}{1.5\baselineskip}
\theoremstyle{definition}
\newtheorem{coro}{Corollary}
\newtheorem*{note}{Note}
\newtheorem{thm}{Theorem}
\newtheorem*{ax}{Axiom}
\newtheorem{defi}{Definition}
\newtheorem{lemma}{Lemma}
\newtheorem*{claim}{Claim}
\newtheorem{exam}{Example}
\newtheorem{prop}{Proposition}
\newtheorem{remark}{Remark}
\newcommand{\hto}{\hookrightarrow}
\newcommand{\wt}[1]{{\widetilde{#1}}}
\newcommand{\ov}[1]{{\overline{#1}}}
\newcommand{\un}[1]{{\underline{#1}}}
\newcommand{\wh}[1]{{\widehat{#1}}}
\newcommand{\wo}{\mathbin{\wh{\otimes}}}
\newcommand{\deff}[1]{{\bf\emph{#1}}}
\newcommand{\boo}[1]{\boldsymbol{#1}}
\newcommand{\abs}[1]{\lvert#1\rvert}
\newcommand{\norm}[1]{\lVert#1\rVert}
\newcommand{\inner}[1]{\langle#1\rangle}
\newcommand{\poisson}[1]{\{#1\}}
\newcommand{\biginner}[1]{\Big\langle#1\Big\rangle}
\newcommand{\set}[1]{\{#1\}}
\newcommand{\Bigset}[1]{\Big\{#1\Big\}}
\newcommand{\BBigset}[1]{\bigg\{#1\bigg\}}
\newcommand{\dis}[1]{$\displaystyle#1$}
\newcommand{\KER}{\bm{\mathrm{Ker}}}
\newcommand{\EE}{\bm{\mathrm{E}}}
\newcommand{\FF}{\bm{\mathrm{F}}}
\newcommand{\JJ}{\bm{\mathrm{J}}}
\newcommand{\HH}{\bm{\mathrm{H}}}
\newcommand{\KK}{\bm{\mathrm{K}}}
\newcommand{\LL}{\bm{\mathrm{L}}}
\newcommand{\VV}{\bm{\mathrm{V}}}
\newcommand{\WW}{\bm{\mathrm{W}}}
\newcommand{\CC}{\bm{\mathrm{C}}}
\newcommand{\SSS}{\bm{\mathrm{S}}}
\newcommand{\R}{\mathbb{R}}
\newcommand{\N}{\mathbb{N}}
\newcommand{\Z}{\mathbb{Z}}
\newcommand{\Q}{\mathbb{Q}}
\newcommand{\E}{\mathcal{E}}
\newcommand{\T}{\mathcal{T}}
\newcommand{\G}{\mathcal{G}}
\newcommand{\F}{\mathcal{F}}
\newcommand{\I}{\mathcal{I}}
\newcommand{\V}{\mathcal{V}}
\newcommand{\W}{\mathcal{W}}
\newcommand{\C}{\mathbb{C}}
\newcommand{\A}{\mathcal{A}}
\newcommand{\HHH}{\mathcal{H}}
\newcommand{\PP}{\mathcal{P}}
\newcommand{\K}{\mathcal{K}}
\newcommand{\RRR}{\mathscr{R}}
\newcommand{\DDD}{\mathscr{D}}
\newcommand{\KKK}{\mathscr{K}}
\newcommand{\LLL}{\mathscr{L}}
\newcommand{\JJJ}{\mathscr{J}}
\newcommand{\e}{\mathscr{E}}
\newcommand{\hh}{\mathscr{H}}
\newcommand{\kk}{\mathscr{K}}
\newcommand{\jj}{\mathscr{J}}
\newcommand{\w}{\mathscr{W}}
\newcommand{\f}{\mathscr{F}}
\newcommand{\g}{\mathscr{G}}
\newcommand{\so}{\mathfrak{so}}
\newcommand{\gl}{\mathfrak{gl}}
\newcommand{\aaa}{\mathbb{A}}
\newcommand{\bbb}{\mathbb{B}}
\newcommand{\ttt}{\mathbb{T}}
\newcommand{\DD}{\mathsf{D}}
\newcommand{\ff}{\mathsf{F}}
\newcommand{\sss}{\mathbb{S}}
\newcommand{\cdd}[1]{\[\begin{CD}#1\end{CD}\]}
\numberwithin{equation}{section}
\normalsize
\title[RRG II]{Local index theory and the Riemann-Roch-Grothendieck theorem for complex flat vector bundles II}
\author{Man-Ho Ho}
\email{homanho@bu.edu}
\address{Hong Kong}
\subjclass[2020]{Primary 19K56, 19L10}
\keywords{Riemann--Roch--Grothendieck theorem, complex flat vector bundles, Cheeger--Chern--Simons class}
\maketitle
\nocite{*}
\begin{center}
\emph{\small Dedicated to the memory of Connie}
\end{center}
\begin{abstract}
In this paper, we prove the real part of the Riemann--Roch--Grothendieck theorem for complex flat vector bundles at the differential form level.
\end{abstract}
\section{Introduction}

Let $\pi:X\to B$ be a submersion with closed fibers $Z$, equipped with a Riemannian structure $\bm{\pi}=(T^HX, g^{T^VX})$. The Riemann--Roch--Grothendieck (RRG) theorem for complex flat vector bundles \cite[Theorem 1.1]{MZ08} states that for any complex flat vector bundle $(F, \nabla^F)$ over $X$,
\begin{equation}\label{eq 1.1}
\CCS(H(Z, F|_Z), \nabla^{H(Z, F|_Z)})=\int_{X/B}e(T^VX)\cup\CCS(F, \nabla^F)
\end{equation}
in $H^{\odd}(B; \C/\Q)$, where $\CCS(F, \nabla^F)$ and $(H(Z, F|_Z), \nabla^{H(Z, F|_Z)})$ are the Cheeger--Chern--Simons class and the cohomology bundle of $(F, \nabla^F)$, respectively.

Bismut--Lott prove a refinement of the imaginary part of (\ref{eq 1.1}) at the differential form level \cite[Theorem 3.23]{BL95} by constructing the analytic torsion form $T$ \cite[Definition 3.22]{BL95}. Bismut then proves the real part of (\ref{eq 1.1}), namely,
\begin{equation}\label{eq 1.2}
\re(\CCS(H(Z, F|_Z), \nabla^{H(Z, F|_Z)}))=\int_{X/B}e(T^VX)\cup\re(\CCS(F, \nabla^F))
\end{equation}
in $H^{\odd}(B; \R/\Q)$ under the assumption that the fibers are fiberwise orientable \cite[Theorem 3.2]{B05}. Ma--Zhang prove (\ref{eq 1.1}) by giving another proof of the imaginary part of (\ref{eq 1.1}) and prove (\ref{eq 1.2}) in full generality.

The strategy of the proof of (\ref{eq 1.2}) by Ma--Zhang is roughly as follows. They first prove that
\begin{equation}\label{eq 1.3}
\begin{split}
&\re(\CCS(H(Z, F|_Z), \nabla^{H(Z, F|_Z)}))-\re(\CCS(H(Z, \C^\ell|_Z)))\\
&=\int_{X/B}e(T^VX)\cup\re(\CCS(F, \nabla^F))
\end{split}
\end{equation}
in $H^{\odd}(B; \R/\Q)$ \cite[(3.98)]{MZ08}, where $\ell=\rk(F)$. Then they show that \cite[p.613-614]{MZ08}
\begin{equation}\label{eq 1.4}
\re(\CCS(H(Z, \C^\ell|_Z)))=0
\end{equation}
by considering the parity of $\dim(Z)$ separately. For $\dim(Z)$ odd, they prove (\ref{eq 1.4}) by an application of Poincar\'e duality; for $\dim(Z)$ even, they prove (\ref{eq 1.4}) by applying a result of Bismut \cite[Theorem 3.12]{B05}, which is essentially an application of the special case of (\ref{eq 1.2}) proved by Bismut.

The main result of this paper is a proof of (\ref{eq 1.2}) at the differential form level. This gives a new proof of (\ref{eq 1.2}).

\begin{thm}\label{thm 1}($=$ Theorem \ref{thm 2})
Let $\pi:X\to B$ be a submersion with closed fibers $Z$, equipped with a Riemannian structure $\bm{\pi}$. For any complex flat vector bundle $(F, \nabla^F)$ of rank $\ell$ over $X$, there exist sufficiently large $k, N\in\N$, an isometric isomorphism $\alpha:(kF, kg^F)\to(\C^{k\ell}, g^{k\ell})$ and a $\Z_2$-graded isometric isomorphism
$$h:(kH(Z, F|_Z)\oplus\wh{\C}^N, kg^{H(Z, F|_Z)}\oplus\wh{g}^N)\to(\C^{km}\oplus\wh{\C}^N, g^{km}\oplus\wh{g}^N)$$
such that
\begin{equation}\label{eq 1.5}
\CS\big(k\nabla^{H(Z, F|_Z), u}\oplus\wh{d}^N, h^*(d^{km}\oplus\wh{d}^N)\big)\equiv\int_{X/B}e(\nabla^{T^VX})\wedge\CS(k\nabla^{F, u}, \alpha^*d^{k\ell}),
\end{equation}
where $m=\rk(H(Z, F|_Z))$.
\end{thm}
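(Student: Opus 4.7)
The plan is to combine (i) a stable isometric trivialization of both $F$ and the $\Z_2$-graded cohomology bundle $H(Z,F|_Z)$ with (ii) a differential form level transgression identity derived from the Bismut superconnection apparatus established in Part I of this series. The trivializations will provide the isomorphisms $\alpha$ and $h$, while the transgression identity, combined with additivity of Chern--Simons forms under direct sums, will yield (\ref{eq 1.5}).

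For $\alpha$, since $F$ is a finite-rank smooth complex vector bundle over the finite-dimensional manifold $X$, obstruction theory gives that $kF$ is smoothly trivializable for $k$ sufficiently large, and Gram--Schmidt with respect to $kg^F$ upgrades any such trivialization to an isometric one. For $h$, I would decompose $H(Z,F|_Z)=H^{\even}(Z,F|_Z)\oplus H^{\odd}(Z,F|_Z)$ and choose $N$ large enough that the two graded summands of $kH(Z,F|_Z)\oplus\wh{\C}^N$ are each smoothly trivial of the correct ranks to match $\C^{km}\oplus\wh{\C}^N$; applying Gram--Schmidt grading-wise with respect to $kg^{H(Z,F|_Z)}\oplus\wh{g}^N$ then produces the $\Z_2$-graded isometric isomorphism.

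The heart of the proof is the identity (\ref{eq 1.5}). I would consider the Bismut superconnection $\aaa_t$ on the infinite-rank $\Z_2$-graded bundle $\pi_*(\Lambda(T^{V*}X)\otimes F)\to B$ (or its unitary variant adapted to $\nabla^{F,u}$) and transgress its Chern character $\ch(\aaa_t^2)$ over $t\in(0,\infty)$. The $t\to 0^+$ limit is $\int_{X/B}e(\nabla^{T^VX})\wedge\ch(\nabla^{F,u})$ by the local family index theorem, and the $t\to\infty$ limit is $\ch(\nabla^{H(Z,F|_Z),u})$; their difference is exact, realized by an explicit transgression form. Coupling this with Chern--Simons transgressions relating $\nabla^{F,u}$ to $\alpha^*d^{k\ell}$ and $\nabla^{H(Z,F|_Z),u}$ to $h^*(d^{km}\oplus\wh{d}^N)$, and using that the trivial graded connection $\wh{d}^N$ on $\wh{\C}^N$ contributes trivially (being flat with vanishing super Chern character), should yield (\ref{eq 1.5}) modulo exact forms after multiplying by $k$.

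The main obstacle is executing this last step at the strict differential form level rather than merely in de Rham cohomology. Chern--Simons forms are defined only up to exact forms, so the precise meaning of $\equiv$ must be tracked carefully when assembling the three pieces of the transgression (the Bismut $t$-transgression and the two CS transgressions on $F$ and $H(Z,F|_Z)$). One must also handle the Kamber--Tondeur 1-form $\omega=(g^F)^{-1}\nabla^F g^F$ arising when passing between $\nabla^F$ and $\nabla^{F,u}$ and verify that its contribution, after fiber integration against $e(\nabla^{T^VX})$, vanishes up to exact forms. I expect the framework of Part I to provide the bookkeeping tools needed for this assembly.
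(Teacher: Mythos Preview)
Your proposal identifies the right ingredients but has a genuine gap at the ``coupling'' step. The $t$-transgression of the Bismut superconnection on $B$ produces an eta form $\wh{\eta}$ satisfying $d\wh{\eta}=\int_{X/B}e(\nabla^{T^VX})\wedge\ch(k\nabla^{F,u})-\ch(k\nabla^{H(Z,F|_Z),u})$; combined with the two Chern--Simons transgressions you describe, this shows at best that
\[
\int_{X/B}e(\nabla^{T^VX})\wedge\CS(k\nabla^{F,u},\alpha^*d^{k\ell})-\CS\big(k\nabla^{H(Z,F|_Z),u}\oplus\wh{d}^N,h^*(d^{km}\oplus\wh{d}^N)\big)
\]
differs from $\wh{\eta}$ by a closed form, not that it is exact. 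To close the argument you need $\wh{\eta}\equiv 0$, and this is a deep input you have not invoked: it is Bismut's vanishing theorem for the de Rham eta form \cite[Theorem~3.7]{B05} (see also \cite[Proposition~2.3]{Z04}). Without it your assembly establishes (\ref{eq 1.5}) only in de Rham cohomology, which is exactly the obstacle you flag at the end.

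The paper packages all three transgressions into one by working over $\wt{B}=B\times I$. One builds a connection on $p_X^*(kF)\to\wt{X}$ interpolating between $k\nabla^F$ and $\alpha^*d^{k\ell}$; the Mi\v{s}\v{c}enko--Fomenko construction for the perturbed generalized de Rham operator over $\wt{B}$ then yields a single index bundle $\LLL\to\wt{B}$ whose endpoint restrictions are stably isomorphic to the two cohomology bundles. This comparison is what actually determines $N$ and the stable isomorphism underlying $h$; your abstract obstruction-theoretic trivialization of $H(Z,F|_Z)$ is disconnected from the analysis and would not give an $h$ for which (\ref{eq 1.5}) holds. A single eta form on $\wt{B}$ is then defined, and fiber-integrating its exterior derivative over $I$ converts the local index identity into (\ref{eq 3.31}), with both Chern--Simons forms appearing automatically as $I$-integrals of Chern character forms. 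The residual boundary terms $i_{B,j}^*\wh{\eta}^\Lambda$ are then shown to be exact precisely by reducing to Bismut's vanishing theorem. Two minor points: your claim that $kF$ is trivializable for large $k$ by obstruction theory alone is false without using flatness of $\nabla^F$ (a line bundle with $c_1\neq 0$ is a counterexample); and the Kamber--Tondeur form is not handled by a separate vanishing argument but by absorbing it into the operator as the perturbation $V$, so that $\DD^{\Lambda\otimes(kF);V}$ coincides with the twisted de Rham--Hodge operator and has the cohomology bundle as its kernel.
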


In the final step of the proof of Theorem \ref{thm 1}, we make use of a result by Bismut \cite[Theorem 3.7]{B05} (see also Zhang \cite[Proposition 2.3]{Z04}).

Although the left-hand side of (\ref{eq 1.5}) does not look like a differential form representative of $\re(\CCS(H(Z, F|_Z), \nabla^{H(Z, F|_Z)}))$, in Theorem \ref{thm 3} we justify the claim that (\ref{eq 1.5}) is a refinement of (\ref{eq 1.2}) at the differential form level. On the other hand, by taking $(F, \nabla^F)$ to be the trivial bundle $(\C^\ell, d^\ell)$ in Theorem \ref{thm 1} and noting that a differential form representative of $\CCS(\C^\ell, d^\ell)$ is given by an odd Chern character form, which implies $\CCS(\C^\ell, d^\ell)=0$ in $H^{\odd}(X; \C/\Q)$, we obtain a unified proof of (\ref{eq 1.4}) at the differential form level regardless of the parity of $\dim(Z)$.

The proof of Theorem \ref{thm 1} is similar to that of \cite[Theorem 1]{H20} in spirit. However, there are two notable differences. The first difference is the use of a perturbed generalized twisted de Rham operator in the proof of Theorem \ref{thm 1}, in contrast to the perturbed twisted spin$^c$ Dirac operator used in \cite[Theorem 1]{H20}. The former allows us to treat the even and odd dimensional fiber cases simultaneously, while the latter only allows us to treat the even dimensional fiber case. The second difference is a refinement of the proof of \cite[Theorem 1]{H20}, namely, we take a more suitable pair of unitary connections on the ``index bundle" over $B\times[0, 1]$ when defining a certain Chern--Simons form. This is the main reason why we can directly consider complex flat vector bundles instead of $\Z_2$-graded ones of virtual rank zero, as we did in \cite{H20}.

While Theorem \ref{thm 1} is a generalization of \cite[Theorem 1]{H20}, we would like to emphasize that \cite[Theorem 1]{H20} is much weaker than Theorem \ref{thm 1}. First, \cite[Theorem 1]{H20} only holds for complex flat vector bundles that are $\Z_2$-graded and have virtual rank zero. Second, and more importantly, we have come to realize the difficulty of proving (\ref{eq 1.4}) independently when we try to obtain (\ref{eq 1.2}) from (\ref{eq 1.3}). Although (\ref{eq 1.4}) is now a consequence of Theorem \ref{thm 1}, we still regard proving (\ref{eq 1.4}) independently as a difficult problem, and the proof of (\ref{eq 1.4}) given by Ma--Zhang and Bismut is more intricate than we expect.

\section*{Acknowledgement}

The author is indebted to Steve Rosenberg for his comments and suggestions for this paper. Some of the ideas in this paper are inspired during a visit to the Institute of Mathematics of the Academia Sinica, Taiwan in July, 2019. The author would like to thank Jih-Hsin Cheng for the invitation, Yi-Sheng Wang for many enlightening discussions, and the Academia Sinica for the hospitality.

The author would like to dedicate this paper to his beloved chihuahua, Connie, who was a cherished member of his family. We are lucky to have each other for many years. Her memory will always be with us.

\section{Background Material}\label{s 2}

\subsection{Notations and conventions}\label{s 2.1}

In this subsection, we fix the notations and conventions used in this paper.

In this paper $X$ and $B$ are closed manifolds and $I$ is the closed interval $[0, 1]$. Given a manifold $X$, define $\wt{X}=X\times I$. Given $t\in I$, define a map $i_{X, t}:X\to\wt{X}$ by $i_{X, t}(x)=(x, t)$. Denote by $p_X:\wt{X}\to X$ the standard projection map. For $k\geq 0$, denote by $\Omega^k_\Q(X, \C)$ the set of all complex-valued closed $k$-forms on $X$ with periods in $\Q$, and write $\Omega^k_\Q(X)$ for $\Omega^k_\Q(X, \R)$. For any differential forms $\omega$ and $\eta$, we write $\omega\equiv\eta$ if $\omega-\eta\in\im(d)$.

To shorten the notations and terminologies, we will work with the category $\bun_\nabla(X)$ whose objects $\EE=(E, g^E, \nabla^E)$, called geometric bundles, are Hermitian bundles and unitary connections and whose morphisms $\alpha:\EE\to\FF$ are isometric isomorphisms $\alpha:(E, g^E)\to(F, g^F)$, i.e. $g^E=\alpha^*g^F$.

Here are some terminologies and notations regarding geometric bundles.
\begin{itemize}
  \item If $\alpha:\EE\to\FF$ is a morphism, then $\alpha^*\nabla^F:=\alpha^{-1}\circ\nabla^F\circ\alpha$ is a unitary connection on $(E, g^E)$.
  \item A geometric bundle $\EE=(E, g^E, \nabla^E)$ is said to be $\Z_2$-graded if $E\to X$, $g^E$ and $\nabla^E$ are $\Z_2$-graded.
  \item For a $\Z_2$-graded geometric bundle $\EE$, denote by $\EE^{\op}$ the $\Z_2$-graded geometric bundle whose $\Z_2$-graded grading is opposite to that of $\EE$.
  \item A $\Z_2$-graded geometric bundle $\EE$ is said to be balanced if $E^+=E^-$, $g^{E, +}=g^{E, -}$ and $\nabla^{E, +}=\nabla^{E, -}$.
  \item For any $\ell\in\N$, write $\CC^\ell=(\C^\ell, g^\ell, d^\ell)$ for the geometric bundle that consists of the trivial bundle of rank $\ell$ with the standard Hermitian metric and the standard trivial connection.
  \item For a geometric bundle $\EE=(E, g^E, \nabla^E)$, write $\wh{\EE}=(\wh{E}, g^{\wh{E}}, \nabla^{\wh{E}})$ for the balanced $\Z_2$-graded geometric bundle defined by $\wh{E}^\pm=E$, and similarly for $g^{\wh{E}}$ and $\nabla^{\wh{E}}$. In the case that $\EE=\CC^\ell$, we write $\wh{\CC}^\ell=(\wh{\C}^\ell, \wh{g}^\ell, \wh{d}^\ell)$ for $\wh{\EE}$.
\end{itemize}

Let $(E, g^E)$ be a Hermitian bundle with a connection $\nabla^E$ on $E\to X$ that is not assumed to be unitary with respect to $g^E$. Define
\begin{equation}\label{eq 2.1}
\omega(E, g^E):=(g^E)^{-1}(\nabla^Eg^E)\in\Omega^1(X, \End(E)).
\end{equation}
By \cite[p.62]{BZ92}, the connection $\nabla^{E, u}$ on $E\to X$ defined
\begin{equation}\label{eq 2.2}
\nabla^{E, u}=\nabla^E+\frac{1}{2}\omega(E, g^E)
\end{equation}
is unitary with respect to $g^E$. Write $\EE^u$ for $(E, g^E, \nabla^{E, u})$.

In this paper, we will constantly make use of Chern character form and Chern--Simons form. We refer the readers to \cite[\S2.2]{H23} for the details of the constructions and properties of these forms.

\subsection{Local index theory for perturbed generalized twisted de Rham operators}\label{s 2.2}

In this subsection, we review the local family index theorem for perturbed generalized twisted de Rham operators without the kernel bundle assumption. We refer to \cite[Chapter 10]{BGV}, \cite[\S III]{BL95} and \cite[\S7.12]{FL10} for the details.

Let $\pi:X\to B$ be a submersion with closed fibers $Z$ of dimension $n$. Denote by $T^VX\to X$ the vertical tangent bundle. Recall from \cite[p.918]{FL10} that a Riemannian structure $\bm{\pi}=(T^HX, g^{T^VX})$ on $\pi:X\to B$ consists of a horizontal distribution $T^HX\to X$, i.e. $TX=T^VX\oplus T^HX$, and a metric $g^{T^VX}$ on $T^VX\to X$. Put a Riemannian metric $g^{TB}$ on $TB\to B$. Define a metric $g^{TX}$ on $TX\to X$ by
$$g^{TX}=g^{T^VX}\oplus\pi^*g^{TB}.$$
Denote by $\nabla^{TX}$ and $\nabla^{TB}$ the Levi-Civita connections on $TX\to X$ and $TB\to X$ associated to $g^{TX}$ and $g^{TB}$, respectively. Denote by $P^{T^VX}:TX\to T^VX$ the projection map. Then $\nabla^{T^VX}:=P^{T^VX}\circ\nabla^{TX}\circ P^{T^VX}$ is a metric connection on the Riemannian bundle $(T^VX, g^{T^VX})$.

Define a connection $\wt{\nabla}^{TX}$ on $TX\to X$ by
$$\wt{\nabla}^{TX}=\nabla^{T^VX}\oplus\pi^*\nabla^{TB}.$$
Let $S=\nabla^{TX}-\wt{\nabla}^{TX}\in\Omega^1(X, \End(TX))$. By \cite[Theorem 1.9]{B86} the $(3, 0)$ tensor $g^{TX}(S(\cdot)\cdot, \cdot)$ depends only on the Riemannian structure $\bm{\pi}=(T^HX, g^{T^VX})$. Let $\set{e_1, \ldots, e_n}$ be a local orthonormal frame for $T^VX\to X$. For any $U\in\Gamma(B, TB)$, denote by $U^H\in\Gamma(X, T^HX)$ its horizontal lift. Define a horizontal one-form $k$ on $X$ by
\begin{equation}\label{eq 2.3}
k(U^H)=-\sum_{k=1}^ng^{TX}(S(e_k)e_k, U^H).
\end{equation}
For any $U, V\in\Gamma(B, TB)$,
\begin{equation}\label{eq 2.4}
T(U, V):=-P^{T^VX}[U^H, V^H]
\end{equation}
is a horizontal two-form with values in $T^VX$, and is called the curvature of $\pi:X\to B$.

The complexified exterior bundle $\Lambda(T^VX)^*\to X$ has a natural $\Z_2$-grading and is a Clifford module with Clifford multiplication $c(Y)=\varepsilon(Y)-i(Y)$, where $Y\in\Gamma(X, T^VX)$, and $\varepsilon$ and $i$ are the exterior and interior multiplications, respectively. Denote by $g^{\Lambda(T^VX)^*}$ and $\nabla^{\Lambda(T^VX)^*}$ the extensions of $g^{T^VX}$ and $\nabla^{T^VX}$ to $\Lambda(T^VX)^*\to X$, respectively.

Let $\EE^u=(E, g^E, \nabla^{E, u})$. The generalized twisted de Rham operator $\DD^{\Lambda\otimes E}$ associated to $\EE^u$ and $\bm{\pi}$ is defined to be
$$\DD^{\Lambda\otimes E}=\sum_{k=1}^nc(e_k)\nabla^{\Lambda(T^VX)^*\otimes E, u}_{e_k},$$
where $\nabla^{\Lambda(T^VX)^*\otimes E, u}$ is the tensor product of $\nabla^{\Lambda(T^VX)^*}$ and $\nabla^{E, u}$. It acts on $\Gamma(X, \Lambda(T^VX)^*\otimes E)$ and is odd self-adjoint, i.e. $\DD^{\Lambda\otimes E}_-=(\DD^{\Lambda\otimes E}_+)^*$. Set $\wh{c}(Y)=\varepsilon(Y)+i(Y)$. Define a map $V:\Gamma(X, \Lambda(T^VX)^*\otimes E)\to\Gamma(X, \Lambda(T^VX)^*\otimes E)$ by
\begin{equation}\label{eq 2.5}
V=-\frac{1}{2}\sum_{k=1}^n\wh{c}(e_k)\omega(E, g^E)(e_k).
\end{equation}
Note that $V$ is odd self-adjoint and anti-commutes with the $c(X)$'s. Thus
$$\DD^{\Lambda\otimes E; V}:=\DD^{\Lambda\otimes E}+V$$
is also odd self-adjoint.

Define an infinite rank $\Z_2$-graded complex vector bundle $\pi^\Lambda_*E\to B$ whose fiber over $b\in B$ is
$$(\pi^\Lambda_*E)_b=\Gamma(Z_b, (\Lambda(T^VX)^*\otimes E)|_{Z_b}).$$
Note that $\Omega(X, E)\cong\Omega(B, \pi^\Lambda_*E)$. Denote by $\ast$ the fiberwise Hodge star operator associated to $g^{T^VX}$, and extend it from $\Gamma(X, \Lambda(T^VX)^*)$ to $\Gamma(X, \Lambda(T^VX)^*\otimes E)\cong\Gamma(B, \pi^\Lambda_*E)$. Define an $L^2$-metric on $\pi^\Lambda_*E\to B$ by
$$g^{\pi^\Lambda_*E}(s_1, s_2)(b)=\int_{Z_b}g^E(s_1(b)\wedge\ast s_2(b)).$$

The connection on $\pi^\Lambda_*E\to B$ defined by
$$\nabla^{\pi^\Lambda_*E}_Us=\nabla^{\Lambda(T^VX)^*\otimes E}_{U^H}s,$$
where $s\in\Gamma(B, \pi^\Lambda_*E)$ and $U\in\Gamma(B, TB)$, is $\Z_2$-graded. Then
$$\nabla^{\pi^\Lambda_*E, u}:=\nabla^{\pi^\Lambda_*E}+\frac{1}{2}k,$$
where $k$ is given by (\ref{eq 2.3}), is a $\Z_2$-graded unitary connection on the Hermitian bundle $(\pi^\Lambda_*E, g^{\pi^\Lambda_*E})$. Write $\pi^\Lambda_*\EE^u=(\pi^\Lambda_*E, g^{\pi^\Lambda_*E}, \nabla^{\pi^\Lambda_*E, u})$.

Define the rescaled Bismut superconnection $\pi^\Lambda_*E\to B$ by
$$\bbb^{\Lambda, E; V}_t=\sqrt{t}\DD^{\Lambda\otimes E; V}+\nabla^{\pi^\Lambda_*E, u}-\frac{c(T)}{4\sqrt{t}},$$
where $T$ is given by (\ref{eq 2.4}). By \cite[(3.76)]{BL95}, we have
\begin{equation}\label{eq 2.6}
\lim_{t\to 0}\ch(\bbb^{\Lambda, E; V}_t)=\int_{X/B}e(\nabla^{T^VX})\wedge\ch(\nabla^{E, u}).
\end{equation}
\begin{remark}
For any given complex flat vector bundle $(F, \nabla^F)$, \cite[(3.76)]{BL95} states that
$$\lim_{t\to 0}\ch(\C_t)=\int_{X/B}e(\nabla^{T^VX})\wedge\ch(\nabla^{F, u}),$$
where $\C_t$ is the rescaled superconnection defined by \cite[(3.50)]{BL95}. Under our notations, \cite[(3.59)]{BL95} states that
\begin{equation}\label{eq 2.7}
\C_{4t}=\bigg(\sqrt{t}\sum_{k=1}^nc(e_k)\nabla^{\Lambda(T^VX)^*\otimes F, u}_{e_k}+\nabla^{\pi^\Lambda_*F, u}-\frac{c(T)}{4\sqrt{t}}\bigg)-\frac{\sqrt{t}}{2}\sum_{k=1}^n\wh{c}(e_k)\omega(F, g^F)(e_k).
\end{equation}
The right-hand side of (\ref{eq 2.7}) is exactly $\bbb^{\Lambda, F; V}_t$. Thus (\ref{eq 2.6}) holds for complex flat vector bundles. Since the proof of \cite[(3.76)]{BL95} does not depend on the flatness of $\nabla^F$, (\ref{eq 2.6}) holds.
\end{remark}

By applying \cite[Lemma 2.3]{MF79} to $\DD^{\Lambda\otimes E; V}$, Mi\v s\v cenko--Fomenko \cite[p.96-97]{MF79} (see also \cite[Lemma 7.13]{FL10}) prove that there exist finite rank subbundles $L^\pm\to B$ and complementary closed subbundles $K^\pm\to B$ of $(\pi^\Lambda_*E)^\pm\to B$ such that
\begin{equation}\label{eq 2.8}
(\pi^\Lambda_*E)^+=K^+\oplus L^+,\qquad(\pi^\Lambda_*E)^-=K^-\oplus L^-,
\end{equation}
$\DD^{\Lambda\otimes E; V}_+:(\pi^\Lambda_*E)^+\to(\pi^\Lambda_*E)^-$ is block diagonal as a map with respect to (\ref{eq 2.8}), and $\DD^{\Lambda\otimes E; V}_+|_{K^+}:K^+\to K^-$ is a smooth bundle isomorphism.

Given $L^\pm\to B$ satisfying the above conditions, we say the $\Z_2$-graded complex vector bundle $L\to B$, defined by $L=L^+\oplus L^-$, satisfies the MF property for $\DD^{\Lambda\otimes E; V}$. If $L\to B$ satisfies the MF property for $\DD^{\Lambda\otimes E; V}$, then the analytic index of $[E]\in K(X)$ associated to $\DD^{\Lambda\otimes E; V}$ is defined to be
$$\ind^{a, \Lambda}([E])=[L^+]-[L^-]\in K(B).$$
It is proved in \cite[p.96-97]{MF79} that $\ind^{a, \Lambda}([E])$ is independent of the choice of $L\to B$ satisfying the MF property for $\DD^{\Lambda\otimes E; V}$.
\begin{remark}\label{remark 2}
If $L_0\to B$ and $L_1\to B$ are $\Z_2$-graded complex vector bundles satisfying the MF property for $\DD^{\Lambda\otimes E; V}$, it follows from above and \cite[p.289]{BGV} that there exist complex vector bundles $V_0\to B$ and $V_1\to B$ such that
$$L_0\oplus\wh{V}_0\cong L_1\oplus\wh{V}_1.$$
\end{remark}

Let $g^L$ be the $\Z_2$-graded Hermitian metric on $L\to B$ inherited from $g^{\pi^\Lambda_*E}$. Denote by $P:\pi^\Lambda_*E\to L$ the $\Z_2$-graded projection map with respect to (\ref{eq 2.8}). Then
$$\nabla^L:=P\circ\nabla^{\pi^\Lambda_*E, u}\circ P$$
is a $\Z_2$-graded unitary connection on the Hermitian bundle $(L, g^L)$. Write
$$\LL=(L, g^L, \nabla^L).$$
Henceforth, whenever $\LL$ is a $\Z_2$-graded geometric bundle for which $L\to B$ satisfies the MF property for $\DD^{\Lambda\otimes E; V}$, $g^L$ and $\nabla^L$ are obtained as above unless otherwise specified.

Let $i_-:L^-\to(\pi^\Lambda_*E)^-$ be the inclusion map. For any $z\in\C$, define a map $\wt{\DD}^{\Lambda\otimes E; V}_+(z):(\pi^\Lambda_*E\oplus L^{\op})^+\to(\pi^\Lambda_*E\oplus L^{\op})^-$ by
$$\wt{\DD}^{\Lambda\otimes E; V}_+(z)=\begin{pmatrix} \DD^{\Lambda\otimes E; V}_+ & zi_- \\ zP_+ & 0 \end{pmatrix}.$$
Note that $\wt{\DD}^{\Lambda\otimes E; V}_+(z)$ is invertible for any $z\neq 0$ (cf. \cite[Lemma 7.20]{FL10}). Define a map $\wt{\DD}^{\Lambda\otimes E; V}(z):\pi^\Lambda_*E\oplus L^{\op}\to\pi^\Lambda_*E\oplus L^{\op}$ by
$$\wt{\DD}^{\Lambda\otimes E; V}(z):=\begin{pmatrix} 0 & (\wt{\DD}^{\Lambda\otimes E; V}_+(z))^* \\ \wt{\DD}^{\Lambda\otimes E; V}_+(z) & 0 \end{pmatrix}.$$

Choose and fix $a\in(0, 1)$ and a smooth increasing function $\alpha:[0, \infty)\to I$ satisfying $\alpha(t)=0$ for all $t\leq a$ and $\alpha(t)=1$ for all $t\geq 1$. Define a rescaled Bismut superconnection on $\pi^\Lambda_*E\oplus L^{\op}\to B$ by
\begin{equation}\label{eq 2.9}
\wh{\bbb}^{\Lambda, E; V}_t=\sqrt{t}\wt{\DD}^{\Lambda\otimes E; V}(\alpha(t))+\big(\nabla^{\pi^\Lambda_*E, u}\oplus\nabla^{L, \op}\big)-\frac{c(T)}{4\sqrt{t}}.
\end{equation}
Since $\wt{\DD}^{\Lambda\otimes E; V}(\alpha(t))$ is invertible for any $t\geq 1$, it follows that
\begin{equation}\label{eq 2.10}
\lim_{t\to\infty}\ch(\wh{\bbb}^{\Lambda, E; V}_t)=0.
\end{equation}
On the other hand, for $t\leq a$, $\wh{\bbb}^{\Lambda, E; V}_t$ decouples, i.e.
\begin{equation}\label{eq 2.11}
\wh{\bbb}^{\Lambda, E; V}_t=\bigg(\sqrt{t}\DD^{\Lambda\otimes E; V}+\nabla^{\pi^\Lambda_*E, u}-\frac{c(T)}{4\sqrt{t}}\bigg)\oplus\nabla^{L, \op}=\bbb^{\Lambda, E; V}_t\oplus\nabla^{L, \op}.
\end{equation}
By (\ref{eq 2.6}), we have
\begin{equation}\label{eq 2.12}
\begin{split}
\lim_{t\to 0}\ch(\wh{\bbb}^{\Lambda, E; V}_t)&=\lim_{t\to 0}\ch(\bbb^{\Lambda, E; V}_t)-\ch(\nabla^L)\\
&=\int_{X/B}e(\nabla^{T^VX})\wedge\ch(\nabla^{E, u})-\ch(\nabla^L).
\end{split}
\end{equation}
The Bismut--Cheeger eta form associated to $\wh{\bbb}^{\Lambda, E; V}_t$ is defined to be
\begin{equation}\label{eq 2.13}
\wh{\eta}^\Lambda(\EE^u, \bm{\pi}, \LL)=\frac{1}{\sqrt{\pi}}\int^\infty_0\str\bigg(\frac{d\wh{\bbb}^{\Lambda, E; V}_t}{dt}e^{-\frac{1}{2\pi i}(\wh{\bbb}^{\Lambda, E; V}_t)^2}\bigg)dt.
\end{equation}
By (\ref{eq 2.10}) and (\ref{eq 2.12}), the local family index theorem for $\DD^{\Lambda\otimes E; V}$ with respect to $\LL$ is given by
$$d\wh{\eta}^\Lambda(\EE^u, \bm{\pi}, \LL)=\int_{X/B}e(\nabla^{T^VX})\wedge\ch(\nabla^{E, u})-\ch(\nabla^L).$$

\section{Main Results}\label{s 3}

We prove the main results in this section. We refer the readers to \cite[\S III]{BL95} for the details of the local index theory for twisted de Rham operators, and to \cite[\S2.2 and \S2.3]{H20} for a quick summary. In this section, we will repeatedly make use of the following facts.
\begin{itemize}
  \item For any given complex vector bundle $E\to X$ with a connection $\nabla^E$ satisfying $\ch(\nabla^E)=\rk(E)$, there exists a (sufficiently large) $k\in\N$ such that $kE\cong k\C^{\rk(E)}$ as complex vector bundles (cf. \cite[Remark 1]{H20}). In particular, this applies to any complex flat vector bundle.
  \item Let $\EE$ and $\FF$ be geometric bundles. If there exists a bundle isomorphism $\alpha:E\to F$, there always exists an isometric isomorphism, still denoted by $\alpha:(E, g^E)\to(F, g^F)$, that is uniquely determined by $g^E$ and $g^F$ \cite[(2) of Remark 2.1]{H23}. Thus $\alpha:\EE\to\FF$ is a morphism.
\end{itemize}

\subsection{An equality of odd degree differential forms}\label{s 3.1}

In this subsection, we prove Theorem \ref{thm 1}.

Let $\pi:X\to B$ be a submersion with closed fibers $Z$, equipped with a Riemannian structure $\bm{\pi}$. Given a complex flat vector bundle $(F, \nabla^F)$ over $X$ and a Hermitian metric $g^F$, write $\FF^u=(F, g^F, \nabla^{F, u})$ and
$$\HH(Z, F|_Z)=(H(Z, F|_Z), g^{H(Z, F|_Z)}, \nabla^{H(Z, F|_Z), u}),$$
where $(H(Z, F|_Z), \nabla^{H(Z, F|_Z)})$ is the cohomology bundle associated to $(F, \nabla^F)$.
\begin{thm}\label{thm 2}
Let $\pi:X\to B$ be a submersion with closed fibers $Z$, equipped with a Riemannian structure $\bm{\pi}$. Let $(F, \nabla^F)$ be a complex flat vector bundle of rank $\ell$ over $X$ and $g^F$ a Hermitian metric on $F\to X$. Then there exist sufficiently large $k, N\in\N$, a morphism $\alpha:k\FF^u\to\CC^{k\ell}$ and a $\Z_2$-graded morphism
$$h:k\HH(Z, F|_Z)\oplus\wh{\CC}^N\to\CC^{km}\oplus\wh{\CC}^N$$
such that
\begin{equation}\label{eq 3.1}
\CS\big(k\nabla^{H(Z, F|_Z), u}\oplus\wh{d}^N, h^*(d^{km}\oplus\wh{d}^N)\big)\equiv\int_{X/B}e(\nabla^{T^VX})\wedge\CS(k\nabla^{F, u}, \alpha^*d^{k\ell}).
\end{equation}
Here, $\CC^{km}=(\C^{km}, g^{km}, d^{km})$ is a $\Z_2$-graded geometric bundle, where $(\C^{km})^\pm=\C^{km_\pm}$ with $m_+:=\rk(H^{\even}(Z, F|_Z))$ and $m_-:=\rk(H^{\odd}(Z, F|_Z)$.
\end{thm}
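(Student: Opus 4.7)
The plan is to extract (\ref{eq 3.1}) from the local family index theorem for $\DD^{\Lambda\otimes F;V}$ established in \S\ref{s 2.2}, combined with a Chern--Simons comparison between an MF-chosen $\Z_2$-graded geometric bundle $\LL$ and the cohomology bundle $\HH(Z,F|_Z)$. First, I would pick $L\to B$ satisfying the MF property for $\DD^{\Lambda\otimes F;V}$, set $\LL=(L,g^L,\nabla^L)$, and record
$$d\wh{\eta}^\Lambda(\FF^u,\bm{\pi},\LL)=\int_{X/B}e(\nabla^{T^VX})\wedge\ch(\nabla^{F,u})-\ch(\nabla^L).$$
Since $\nabla^F$ is flat, $\ch(\nabla^{F,u})$ is cohomologous to $\ell$, so $kF\cong k\C^\ell$ and the morphism $\alpha$ exists for some large $k$. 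By Bismut--Lott, $H(Z,F|_Z)$ carries a flat connection, so the same stable triviality argument applied in each $\Z_2$-grade, together with stabilization by $\wh{\CC}^N$, produces the $\Z_2$-graded morphism $h$.

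The decisive step is to replace $\LL$ by $\HH(Z,F|_Z)$ on the right-hand side of the displayed identity at the differential form level. Because $[L^+]-[L^-]=[H^+(Z,F|_Z)]-[H^-(Z,F|_Z)]$ in $K(B)$, Remark~\ref{remark 2} provides, after further stabilization by some $\wh{\CC}^{N'}$, a $\Z_2$-graded isomorphism between the stabilized versions of $L$ and $H(Z,F|_Z)$. On the cylinder $B\times I$ I would equip the resulting ``index bundle'' with a carefully chosen pair of unitary connections interpolating between the $\LL$-presentation at $t=0$ and the $\HH(Z,F|_Z)$-presentation at $t=1$, and then invoke Bismut's \cite[Theorem 3.7]{B05} (equivalently \cite[Proposition 2.3]{Z04}) to identify the associated Chern--Simons transgression with $\wh{\eta}^\Lambda(\FF^u,\bm{\pi},\LL)$ modulo $\im(d)$.

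With this identification in hand, I would multiply the family index identity by $k$ and pull back along $\alpha$ and $h$. Using $d\CS(\nabla_0,\nabla_1)=\ch(\nabla_1)-\ch(\nabla_0)$, Stokes's theorem for $\int_{X/B}$, and $de(\nabla^{T^VX})=0$, the fiber integral $\int_{X/B}e(\nabla^{T^VX})\wedge k\ch(\nabla^{F,u})$ becomes $k\ell\int_{X/B}e(\nabla^{T^VX})$ minus an exact form involving $\int_{X/B}e(\nabla^{T^VX})\wedge\CS(k\nabla^{F,u},\alpha^*d^{k\ell})$; likewise on the left, $k\ch(\nabla^L)$ turns into the Chern--Simons form on the left-hand side of (\ref{eq 3.1}) plus a matching constant term. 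The topological constants cancel, and (\ref{eq 3.1}) follows modulo exact forms.

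The main obstacle lies in the second paragraph: with a naive pair of connections on the index bundle over $B\times I$, the Chern--Simons transgression retains stabilization terms that do not assemble into (\ref{eq 3.1}). The ``more suitable pair of unitary connections'' announced in the introduction is precisely what is needed to make the transgression land cleanly, so that one can treat arbitrary complex flat vector bundles rather than only $\Z_2$-graded ones of virtual rank zero as in \cite{H20}. The perturbation $V$ in $\DD^{\Lambda\otimes F;V}$ is then what allows the even- and odd-dimensional fiber cases to be handled by the same formula.
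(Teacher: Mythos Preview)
Your proposal collects the right ingredients---the MF index bundle, the Bismut--Cheeger eta form, the stabilization by balanced bundles, and Bismut's vanishing result \cite[Theorem 3.7]{B05}---but the way you assemble them in the third paragraph does not yield (\ref{eq 3.1}). Substituting $k\ch(\nabla^{F,u})=k\ell-d\CS(k\nabla^{F,u},\alpha^*d^{k\ell})$ and the analogous expression for $k\ch(\nabla^L)$ into the family index identity $d\wh{\eta}^\Lambda=\int_{X/B}e\wedge\ch-\ch(\nabla^L)$ only produces an equality of \emph{exact} forms, i.e.\ an equation of the shape $d(\cdots)=d(\cdots)$. After your constants cancel you can conclude at best that the two sides of (\ref{eq 3.1}) differ by a \emph{closed} form, not by an exact one. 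Likewise, your paragraph~2 identification $\wh{\eta}^\Lambda(\FF^u,\bm{\pi},\LL)\equiv\CS(\nabla^{L,\mathrm{stab}},\nabla^{H,u,\mathrm{stab}})$ is a Chern--Simons form between $\nabla^L$ and $\nabla^{H(Z,F|_Z),u}$, whereas (\ref{eq 3.1}) involves a Chern--Simons form between $k\nabla^{H(Z,F|_Z),u}$ and the \emph{trivial} connection pulled back by $h$; there is no mechanism in your outline to pass from one to the other.

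The paper closes this gap by placing the cylinder on the total space rather than on the base: one works with $\wt{\pi}:\wt{X}\to\wt{B}$ and interpolates, along the $I$-direction, between the two \emph{flat} connections $k\nabla^F$ and $\alpha^*d^{k\ell}$ on $kF$ (not between two presentations of the index bundle). An MF bundle $\LLL\to\wt{B}$ is chosen once for the whole family, and the crucial ``more suitable pair of connections'' is imposed on $\LLL\oplus\wh{\JJJ}$ by pulling back, via a morphism $\varphi$, a connection $\nabla^{\RRR}$ that already interpolates between $k\nabla^{H(Z,F|_Z),u}\oplus\wh{d}^N$ and $h^*(d^{km}\oplus\wh{d}^N)$. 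One then applies the transgression formula $i_{B,1}^*\omega-i_{B,0}^*\omega\equiv-\int_{\wt{B}/B}d^{\wt{B}}\omega$ to the eta form on $\wt{B}$; the fiber integral over $I$ of $\ch(\nabla^{\f,u})$ and of $\ch(\nabla^{\RRR})$ \emph{is by definition} the pair of Chern--Simons forms appearing in (\ref{eq 3.1}), so no illicit ``integration'' of $d\wh{\eta}=\cdots$ is needed. The role of \cite[Theorem 3.7]{B05} is then exactly to show that the two boundary terms $i_{B,j}^*\wh{\eta}^\Lambda$ vanish modulo $\im(d)$, because at each endpoint the twisting connection is flat, the perturbed operator coincides with the de~Rham--Hodge operator with existing kernel bundle, and the modified eta form reduces (after splitting off the balanced $\wh{\CC}^N$ piece) to the de~Rham eta form $\wt{\eta}^{\dr}$, which Bismut shows is identically zero. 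In short: your cylinder lives over $B$ and compares two MF choices; the paper's cylinder lives over $X$ and compares two flat twists, which is what makes both Chern--Simons forms in (\ref{eq 3.1}) appear simultaneously.
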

\begin{proof}
We carry out the proof into several steps.

{\bf Step 1.} Let $k_0\in\N$ satisfy $k_0F\cong k_0\C^\ell$. Let $\alpha:k_0F\to k_0\C^\ell$ be an isomorphism. Let $\nabla^{k_0F}_0=k_0\nabla^F$ and define $\nabla^{k_0F}_1:=\alpha^*d^{k_0\ell}$. Denote by
$$(H_0(Z, k_0F|_Z), \nabla^{H_0(Z, k_0F|_Z)}),\qquad(H_1(Z, k_0F|_Z), \nabla^{H_1(Z, k_0F|_Z)})$$
the cohomology bundles of $(k_0F, \nabla^{k_0F}_0)$ and $(k_0F, \nabla^{k_0F}_1)$, respectively.

Write $n=\dim(Z)$. For each $0\leq j\leq n$, write
$$m_{0, j}=\rk(H^j_0(Z, F|_Z)),\qquad m_{1, j}=\rk(H^j_1(Z, F|_Z)).$$
Let $p_j, q_j\in\N$ satisfy
\begin{equation}\label{eq 3.2}
p_jH^j_0(Z, k_0F|_Z)\cong p_jk_0\C^{m_{0, j}},\qquad q_jH^j_1(Z, k_0F|_Z)\cong q_jk_0\C^{m_{1, j}},
\end{equation}
respectively. For $i\in\set{0, 1}$, write
$$m_{i, +}=\sum_{j\textrm{ even}}m_{i, j},\qquad m_{i, -}=\sum_{j\textrm{ odd}}m_{i, j}\qquad\textrm{ and }\qquad m_i=\sum_{k=0}^nm_{i, j}.$$
Let $p$ be the least common multiple of $p_0, \ldots, p_n, q_0, \ldots, q_n$. By (\ref{eq 3.2}), we have
\begin{equation}\label{eq 3.3}
pH^j_0(Z, k_0F|_Z)\cong pk_0\C^{m_{0, j}},\qquad pH^j_1(Z, k_0F|_Z)\cong pk_0\C^{m_{1, j}}
\end{equation}
for each $0\leq j\leq n$. Let $k=pk_0$. We still denote by $\alpha:kF\to k\C^\ell$ the resulting isomorphism. Note that $\nabla^{kF}_0=k\nabla^F$ and $\nabla^{kF}_1=\alpha^*d^{k\ell}$. By direct summing (\ref{eq 3.3}) over $0\leq j\leq n$, we have
\begin{equation}\label{eq 3.4}
H_0(Z, kF|_Z)\cong\C^{km_0},\qquad H_1(Z, kF|_Z)\cong\C^{km_1}.
\end{equation}
In particular, (\ref{eq 3.4}) implies that
\begin{align}
H^{\even}_0(Z, kF|_Z)&\cong\C^{km_{0, +}},\qquad H^{\even}_1(Z, kF|_Z)\cong\C^{km_{1, +}},\label{eq 3.5}\\
H^{\odd}_0(Z, kF|_Z)&\cong\C^{km_{0, -}},\qquad H^{\odd}_1(Z, kF|_Z)\cong\C^{km_{1, +}}.\label{eq 3.6}
\end{align}

{\bf Step 2.} Write $\EE^u_0=(kF, g^{kF}, \nabla^{kF, u}_0)$ and $\EE^u_1=(kF, g^{kF}, \nabla^{kF, u}_1)$. Let $j\in\set{0, 1}$. Denote by $\DD^{\Lambda\otimes(kF)}_j$ the generalized twisted de Rham operator associated to $\EE^u_j$ and $\bm{\pi}$. Define $\omega_j(kF, g^{kF})$ as in (\ref{eq 2.1}), i.e.
$$\omega_j(kF, g^{kF})=(g^{kF})^{-1}(\nabla^{kF}_jg^{kF}).$$
Let \dis{V_j=-\frac{1}{2}\sum_{i=1}^n\wh{c}(e_i)\omega_j(kF, g^{kF})(e_i)}. Denote by $\DD^{Z, \dr}_j$ the twisted de Rham--Hodge operator associated to $\EE^u_j$ and $\bm{\pi}$ \cite[Definition 3.8]{BL95}. By \cite[(3.36)]{BL95} (see also \cite[Proposition 4.12]{BZ92}), we have
\begin{equation}\label{eq 3.7}
\DD^{\Lambda\otimes(kF); V_j}_j=\DD^{Z, \dr}_j.
\end{equation}
By (\ref{eq 3.7}) and \cite[(3.66)]{BL95}, the kernel bundle $\ker\big(\DD^{\Lambda\otimes(kF); V_j}_j\big)\to B$ exists and
\begin{equation}\label{eq 3.8}
\ker\big(\DD^{\Lambda\otimes(kF); V_j}_j\big)=\ker(\DD^{Z, \dr}_j)\cong H_j(Z, kF|_Z).
\end{equation}

{\bf Step 3.}
Denote by $\wt{\bm{\pi}}$ the Riemannian structure on $\wt{\pi}:\wt{X}\to\wt{B}$ obtained by pulling back $\bm{\pi}$ via $p_X$. Define a Hermitian bundle $(\f, g^\f)$ over $\wt{X}$ by $\f=p_X^*(kF)$ and $g^\f=p_X^*(g^{kF})$. Define a connection $\nabla^\f$ on $\f\to\wt{X}$ by
$$\nabla^\f=dt\wedge\frac{\partial}{\partial t}+(1-t)\nabla^{kF}_0+t\nabla^{kF}_1.$$
Then $\nabla^{\f, u}$ is a unitary connection on $(\f, g^\f)$. By the definitions of $\nabla^{kF}_0$ and $\nabla^{kF}_1$ and (\ref{eq 2.2}), we have
\begin{equation}\label{eq 3.9}
\begin{split}
i_{X, 0}^*\nabla^{\f, u}&=\nabla^{kF, u}_0=k\nabla^{F, u},\\
i_{X, 1}^*\nabla^{\f, u}&=\nabla^{kF, u}_1=\nabla^{kF}_1=\alpha^*d^{k\ell}.
\end{split}
\end{equation}
Write $\bm{\f}^u=(\f, g^\f, \nabla^{\f, u})$. Denote by $\DD^{\Lambda\otimes\f}$ the generalized twisted de Rham operator associated to $\bm{\f}^u$ and $\wt{\bm{\pi}}$. Define $\wt{V}$ by (\ref{eq 2.5}). By applying \cite[Lemma 2.3]{MF79} to $\DD^{\Lambda\otimes\f; \wt{V}}$, let $\bm{\LLL}=(\LLL, g^\LLL, \nabla^\LLL)$ be a $\Z_2$-graded geometric bundle for which $\LLL\to\wt{B}$ satisfies the MF property for $\DD^{\Lambda\otimes\f; \wt{V}}$, i.e. there exists a $\Z_2$-graded complimentary closed subbundle $\KKK\to\wt{B}$ of $\wt{\pi}^\Lambda_*\f\to\wt{B}$ such that
\begin{equation}\label{eq 3.10}
\wt{\pi}^\Lambda_*\f=\KKK\oplus\LLL,
\end{equation}
$\DD^{\Lambda\otimes\f; \wt{V}}_+:(\wt{\pi}^\Lambda_*\f)^+\to(\wt{\pi}^\Lambda_*\f)^-$ is block diagonal as a map with respect to (\ref{eq 3.10}), and the restriction $\DD^{\Lambda\otimes\f; \wt{V}}_+|_{\KKK^+}:\KKK^+\to\KKK^-$ is an isomorphism. Note that
\begin{equation}\label{eq 3.11}
i_{B, j}^*(\wt{\pi}^\Lambda_*\f)^\pm\cong\pi^\Lambda_*(kF)^\pm
\end{equation}
and
$$i_{B, 0}^*\LLL^\pm\cong i_{B, 1}^*\LLL^\pm,\qquad i_{B, 0}^*\KKK^\pm\cong i_{B, 1}^*\KKK^\pm.$$
Define $L_j^\pm=i_{B, j}^*\LLL^\pm$ and $K_j^\pm=i_{B, j}^*\KKK^\pm$. Note that
\begin{equation}\label{eq 3.12}
L_0\cong L_1
\end{equation}
as $\Z_2$-graded complex vector bundles. By (\ref{eq 3.10}) and (\ref{eq 3.11}), we have
\begin{equation}\label{eq 3.13}
\pi^\Lambda_*(kF)^+=K_j^+\oplus L_j^+,\qquad\pi^\Lambda_*(kF)^-=K_j^-\oplus L_j^-,
\end{equation}
and
\begin{equation}\label{eq 3.14}
\DD^{\Lambda\otimes\f; \wt{V}}|_{i_{B, j}^*(\wt{\pi}^\Lambda_*\f)}=\DD^{\Lambda\otimes(kF); V_j}_j.
\end{equation}
It follows from (\ref{eq 3.13}) and (\ref{eq 3.14}) that $\DD^{\Lambda\otimes(kF); V_j}_{j, +}:\pi^\Lambda_*(kF)^+\to\pi^\Lambda_*(kF)^-$ is block diagonal as a map with respect to (\ref{eq 3.13}) and the restriction $\DD^{\Lambda\otimes(kF); V_j}_{j, +}|_{K_j^+}:K_j^+\to K_j^-$ is an isomorphism. Thus $L_j\to B$ satisfies the MF property for $\DD^{\Lambda\otimes(kF); V_j}_j$. Since $\ker\big(\DD^{\Lambda\otimes(kF); V_j}_j\big)\to B$ also satisfies the MF property for $\DD^{\Lambda\otimes(kF); V_j}_j$, it follows from Remark \ref{remark 2} and (\ref{eq 3.8}) that there exist complex vector bundles $V_j\to B$ and $W_j\to B$ such that
$$H_0(Z, kF|_Z)\oplus\wh{W}_0\cong L_0\oplus\wh{V}_0,\qquad H_1(Z, kF|_Z)\oplus\wh{W}_1\cong L_1\oplus\wh{V}_1$$
as $\Z_2$-graded complex vector bundles. Write $L\to B$ for $L_0\to B$. By (\ref{eq 3.12}), we have
\begin{displaymath}
\begin{split}
H_0(Z, kF|_Z)\oplus\wh{W}_0\oplus\wh{V}_1&\cong L\oplus\wh{V}_0\oplus\wh{V}_1\\
&\cong H_1(Z, kF|_Z)\oplus\wh{W}_1\oplus\wh{V}_0.
\end{split}
\end{displaymath}
Rename $\wh{W}_0\oplus\wh{V}_1$ by $\wh{W}_0$ and $\wh{W}_1\oplus\wh{V}_0$ by $\wh{W}_1$. Write $H\to B$ for $V_0\oplus V_1\to B$. Then
\begin{equation}\label{eq 3.15}
H_0(Z, kF|_Z)\oplus\wh{W}_0\cong L\oplus\wh{H}\cong H_1(Z, kF|_Z)\oplus\wh{W}_1
\end{equation}
as $\Z_2$-graded complex vector bundles.

The even part of (\ref{eq 3.15}) is given by
\begin{equation}\label{eq 3.16}
H^{\even}_0(Z, kF|_Z)\oplus W_0\cong H^{\even}_1(Z, kF|_Z)\oplus W_1.
\end{equation}
Since $B$ is compact, there exists a complex vector bundle $V\to B$ such that $W_1\oplus V\cong\C^s$ for some $s\in\N$. By direct summing $V\to B$ and $q\in\N$ copies of $\C^s\to B$ to both sides of (\ref{eq 3.16}) and noting (\ref{eq 3.5}), we have
\begin{equation}\label{eq 3.17}
\C^{km_{0, +}}\oplus\wt{W}_0\cong\C^{km_{1, +}}\oplus\C^{(q+1)s},
\end{equation}
where $\wt{W}_0:=W_0\oplus V\oplus\C^{qs}$. By (\ref{eq 3.17}), $\rk(\wt{W}_0)=(q+1)s+k(m_{1, +}-m_{0, +})$. Choose and fix a sufficiently large $q_0\in\N$ so that
$$(q_0+1)s+k(m_{1, +}-m_{0, +})\geq 2\dim(B).$$
Write $N=(q_0+1)s+k(m_{1, +}-m_{0, +})$. By \cite[Theorem 1.5 of Chapter 9]{H94} (see also \cite[Remark 1]{H20}), we have
\begin{equation}\label{eq 3.18}
\wt{W}_0\cong\C^N.
\end{equation}
Since $H^{\even}_0(Z, kF|_Z)\cong\C^{km_{0, +}}$ (\ref{eq 3.5}), it follows from (\ref{eq 3.18}) that
\begin{equation}\label{eq 3.19}
H^{\even}_0(Z, kF|_Z)\oplus\C^N\cong\C^{km_{0, +}}\oplus\C^N.
\end{equation}
The odd part of (\ref{eq 3.15}) is given by
$$H^{\odd}_0(Z, kF|_Z)\oplus W_0\cong H^{\odd}_1(Z, kF|_Z)\oplus W_1.$$
By the above argument and $H^{\odd}_0(Z, kF|_Z)\cong\C^{km_{0, -}}$ (\ref{eq 3.6}), it becomes
\begin{equation}\label{eq 3.20}
H^{\odd}_0(Z, kF|_Z)\oplus\C^N\cong\C^{km_{0, -}}\oplus\C^N.
\end{equation}
Let $0\leq j\leq n$. Since
$$(H^j_0(Z, kF|_Z), \nabla^{H^j_0(Z, kF|_Z)})\cong k(H^j_0(Z, F|_Z), \nabla^{H^j_0(Z, F|_Z)})$$
as complex flat vector bundles, by dropping the $0$ in $H^j_0(Z, F|_Z)$, $m_{0, j}$, $m_{0, \pm}$ and $m_0$, it follows from (\ref{eq 3.19}) and (\ref{eq 3.20}) that
\begin{equation}\label{eq 3.21}
kH(Z, F|_Z)\oplus\wh{\C}^N\cong\C^{km}\oplus\wh{\C}^N
\end{equation}
as $\Z_2$-graded complex vector bundles.

{\bf Step 4.}
Define a complex vector bundle $J\to B$ by $J=H\oplus V\oplus\C^{q_0s}$. By (\ref{eq 3.15}) and (\ref{eq 3.21}),
\begin{equation}\label{eq 3.22}
L\oplus\wh{J}\cong kH(Z, F|_Z)\oplus\wh{\C}^N\cong\C^{km}\oplus\wh{\C}^N
\end{equation}
as $\Z_2$-graded complex vector bundles. Put Hermitian metrics $g^H$, $g^V$, $g^{km_+}$, $g^{km_-}$, $g^N$ and $g^{q_0s}$ and unitary connections $\nabla^H$, $\nabla^V$, $d^{km_+}$, $d^{km_-}$, $d^N$ and $d^{q_0s}$ on the corresponding vector bundles. Write $\JJ=(J, g^J, \nabla^J)$. Then $\wh{\bm{\JJJ}}:=p_B^*\wh{\JJ}$ is a balanced $\Z_2$-graded geometric bundle over $\wt{B}$.

By (\ref{eq 3.22}), let $h:k\HH(Z, F|_Z)\oplus\wh{\CC}^N\to\CC^{km}\oplus\wh{\CC}^N$ be a $\Z_2$-graded morphism. Note that
\begin{equation}\label{eq 3.23}
k\nabla^{H(Z, F|_Z), u}\oplus\wh{d}^N,\qquad h^*(d^{km}\oplus\wh{d}^N)
\end{equation}
are unitary connections on the Hermitian bundle
$$(kH(Z, F|_Z)\oplus\wh{\C}^N, kg^{H(Z, F|_Z)}\oplus\wh{g}^N).$$
Define a $\Z_2$-graded geometric bundle $\bm{\RRR}$ by $\RRR=p_B^*(kH(Z, F|_Z)\oplus\wh{\C}^N)$, $g^\RRR=p_B^*(kg^{H(Z, F|_Z)}\oplus\wh{g}^N)$ and $\nabla^\RRR$ is defined by \cite[(2.3), (2.4)]{H23} with respect to (\ref{eq 3.23}). Thus
\begin{equation}\label{eq 3.24}
i_{B, 0}^*\nabla^\RRR=k\nabla^{H(Z, F|_Z), u}\oplus\wh{d}^N,\qquad i_{B, 1}^*\nabla^\RRR=h^*(d^{km}\oplus\wh{d}^N).
\end{equation}
Since $i_{B, 0}\circ p_B:\wt{B}\to\wt{B}$ is smoothly homotopic to $\id_{\wt{B}}$, it follows from (\ref{eq 3.22}) that
$$\LLL\oplus\wh{\JJJ}\cong p_B^*(L\oplus\wh{J})\cong p_B^*(kH(Z, F|_Z)\oplus\wh{\C}^N)=\RRR$$
as $\Z_2$-graded complex vector bundles. Let $\varphi:\bm{\LLL}\oplus\wh{\bm{\JJJ}}\to\bm{\RRR}$ be a $\Z_2$-graded morphism. Then
\begin{equation}\label{eq 3.25}
\wt{\nabla}^{\LLL\oplus\wh{\JJJ}}:=\varphi^*\nabla^\RRR
\end{equation}
is a unitary connection on the Hermitian bundle $(\LLL\oplus\wh{\JJJ}, g^\LLL\oplus g^{\wh{\JJJ}})$. Write $\wt{\bm{\LLL}}=(\LLL\oplus\wh{\JJJ}, g^\LLL\oplus g^{\wh{\JJJ}}, \wt{\nabla}^{\LLL\oplus\wh{\JJJ}})$. Since the $\Z_2$-graded morphism $\varphi:\wt{\bm{\LLL}}\to\bm{\RRR}$ satisfies (\ref{eq 3.25}), we identify $\wt{\bm{\LLL}}$ with $\bm{\RRR}$.

{\bf Step 5.} Define a rescaled Bismut superconnection on $\wt{\pi}^\Lambda_*\f\oplus(\LLL\oplus\wh{\JJJ})^{\op}\to\wt{B}$ by
\begin{equation}\label{eq 3.26}
\wh{\bbb}^{\Lambda, \f; \wt{V}}_t=\sqrt{t}\wt{\DD}^{\Lambda\otimes\f; \wt{V}}(\alpha(t))+\big(\nabla^{\wt{\pi}^\Lambda_*\f, u}\oplus\wt{\nabla}^{\LLL\oplus\wh{\JJJ}, \op}\big)-\frac{c(\wt{T})}{4\sqrt{t}}.
\end{equation}
As in (\ref{eq 2.10}) and (\ref{eq 2.12}), we have
\begin{equation}\label{eq 3.27}
\lim_{t\to\infty}\ch(\wh{\bbb}^{\Lambda, \f; \wt{V}}_t)=0
\end{equation}
and by (\ref{eq 3.25}), we have
\begin{equation}\label{eq 3.28}
\begin{split}
\lim_{t\to 0}\ch(\wh{\bbb}^{\Lambda, \f; \wt{V}}_t)&=\int_{\wt{X}/\wt{B}}e(\nabla^{T^V\wt{X}})\wedge\ch(\nabla^{\f, u})-\ch(\wt{\nabla}^{\LLL\oplus\wh{\JJJ}})\\
&=\int_{\wt{X}/\wt{B}}e(\nabla^{T^V\wt{X}})\wedge\ch(\nabla^{\f, u})-\ch(\nabla^\RRR).
\end{split}
\end{equation}
Let $\wh{\eta}^\Lambda(\bm{\f}^u, \wt{\bm{\pi}}, \wt{\bm{\LLL}})$ be the Bismut--Cheeger eta form associated to $\wh{\bbb}^{\Lambda, \f; \wt{V}}_t$ (\ref{eq 3.26}) as in (\ref{eq 2.13}). By (\ref{eq 3.27}) and (\ref{eq 3.28}), we have
\begin{equation}\label{eq 3.29}
d\wh{\eta}^\Lambda(\bm{\f}^u, \wt{\bm{\pi}}, \wt{\bm{\LLL}})=\int_{\wt{X}/\wt{B}}e(\nabla^{T^V\wt{X}})\wedge\ch(\nabla^{\f, u})-\ch(\nabla^\RRR).
\end{equation}
By \cite[(2.1)]{H23}, (\ref{eq 3.29}) and (\ref{eq 3.24}), we have
\begin{equation}\label{eq 3.30}
\begin{split}
&i_{B, 1}^*\wh{\eta}^\Lambda(\bm{\f}^u, \wt{\bm{\pi}}, \wt{\bm{\LLL}})-i_{B, 0}^*\wh{\eta}^\Lambda(\bm{\f}^u, \wt{\bm{\pi}}, \wt{\bm{\LLL}})\\
&\equiv-\int_{\wt{B}/B}d^{\wt{B}}\wh{\eta}^\Lambda(\bm{\f}^u, \wt{\bm{\pi}}, \wt{\bm{\LLL}})\\
&\equiv-\int_{\wt{B}/B}\bigg(\int_{\wt{X}/\wt{B}}e(\nabla^{T^V\wt{X}})\wedge\ch(\nabla^{\f, u})-\ch(\nabla^\RRR)\bigg)\\
&\equiv-\int_{\wt{B}/B}\int_{\wt{X}/\wt{B}}e(\nabla^{T^V\wt{X}})\wedge\ch(\nabla^{\f, u})-\CS\big(k\nabla^{H(Z, F|_Z), u}\oplus\wh{d}^N, h^*(d^{km}\oplus\wh{d}^N)\big).
\end{split}
\end{equation}
Since $e(\nabla^{T^V\wt{X}})=p_X^*e(\nabla^{T^VX})$, it follows from (\ref{eq 3.9}) that
\begin{displaymath}
\begin{split}
-\int_{\wt{B}/B}\int_{\wt{X}/\wt{B}}e(\nabla^{T^V\wt{X}})\wedge\ch(\nabla^{\f, u})&\equiv-\int_{X/B}\int_{\wt{X}/X}p_X^*e(\nabla^{T^VX})\wedge\ch(\nabla^{\f, u})\\
&\equiv-\int_{X/B}e(\nabla^{T^VX})\wedge\int_{\wt{X}/X}\ch(\nabla^{\f, u})\\
&\equiv\int_{X/B}e(\nabla^{T^VX})\wedge\CS(k\nabla^{F, u}, \alpha^*d^{k\ell}).
\end{split}
\end{displaymath}
Thus (\ref{eq 3.30}) becomes
\begin{equation}\label{eq 3.31}
\begin{split}
&i_{B, 1}^*\wh{\eta}^\Lambda(\bm{\f}^u, \wt{\bm{\pi}}, \wt{\bm{\LLL}})-i_{B, 0}^*\wh{\eta}^\Lambda(\bm{\f}^u, \wt{\bm{\pi}}, \wt{\bm{\LLL}})\\
&\equiv\int_{X/B}e(\nabla^{T^VX})\wedge\CS(k\nabla^{F, u}, \alpha^*d^{k\ell})-\CS\big(k\nabla^{H(Z, F|_Z), u}\oplus\wh{d}^N, h^*(d^{km}\oplus\wh{d}^N)\big).
\end{split}
\end{equation}

{\bf Step 6.} To prove (\ref{eq 3.1}), by (\ref{eq 3.31}) it suffices to show that
\begin{equation}\label{eq 3.32}
i_{B, j}^*\wh{\eta}^\Lambda(\bm{\f}^u, \wt{\bm{\pi}}, \wt{\bm{\LLL}})\equiv 0
\end{equation}
for $j\in\set{0, 1}$. We prove (\ref{eq 3.32}) for $j=0$. The argument for $j=1$ is similar.

Let $\psi:H(Z, kF|_Z)\to\ker(\DD^{Z, \dr}_0)$ be an isomorphism given by \cite[(3.66)]{BL95}. Recall that $g^{H(Z, kF|_Z)}:=\psi^*g^{\ker(\DD^{Z, \dr}_0)}$. Write $\KER(\DD^{Z, \dr}_0)$ for the $\Z$-graded geometric bundle $\big(\ker(\DD^{Z, \dr}_0), g^{\ker(\DD^{Z, \dr}_0)}, \nabla^{\ker(\DD^{Z, \dr}_0)}\big)$, where
$$\nabla^{\ker(\DD^{Z, \dr}_0)}:=P^{\ker(\DD^{Z, \dr}_0)}\nabla^{\pi^\Lambda_*(kF), u}.$$
Then $\psi:\HH(Z, kF|_Z)\to\KER(\DD^{Z, \dr}_0)$ is a $\Z$-graded morphism. Since
$$\nabla^{H(Z, kF|_Z), u}=\psi^*\nabla^{\ker(\DD^{Z, \dr}_0)}$$
by \cite[(2.35)]{H20}, we identify $\HH(Z, kF|_Z)$ with $\KER(\DD^{Z, \dr}_0)$. By the identifications of $\wt{\bm{\LLL}}$ with $\bm{\RRR}$ and of $\HH(Z, kF|_Z)$ with $\KER(\DD^{Z, \dr}_0)$, the rescaled Bismut superconnection $\wh{\bbb}^{\Lambda, \f; \wt{V}}_t$ (\ref{eq 3.26}) is equal to
\begin{displaymath}
\begin{split}
\wh{\bbb}^{\Lambda, \f; \wt{V}}_t&=\sqrt{t}\wt{\DD}^{\Lambda\otimes\f; \wt{V}}(\alpha(t))+\big(\nabla^{\wt{\pi}^\Lambda_*\f, u}\oplus\wt{\nabla}^{\LLL\oplus\wh{\JJJ}, \op}\big)-\frac{c(\wt{T})}{4\sqrt{t}}\\
&=\sqrt{t}\wt{\DD}^{\Lambda\otimes\f; \wt{V}}(\alpha(t))+\big(\nabla^{\wt{\pi}^\Lambda_*\f, u}\oplus\nabla^{\RRR, \op}\big)-\frac{c(\wt{T})}{4\sqrt{t}}.
\end{split}
\end{displaymath}
Thus
\begin{equation}\label{eq 3.33}
\begin{split}
i_{B, 0}^*\wh{\bbb}^{\Lambda, \f; \wt{V}}_t&=\sqrt{t}\wt{\DD}^{\Lambda\otimes(kF); V_0}_0(\alpha(t))+\big(\nabla^{\pi^\Lambda_*(kF), u}\oplus(k\nabla^{H(Z, F|_Z), u}\oplus\wh{d}^N)^{\op}\big)-\frac{c(T)}{4\sqrt{t}}\\
&=\bigg(\sqrt{t}\wt{\DD}^{Z, \dr}_0(\alpha(t))+\big(\nabla^{\pi^\Lambda_*(kF), u}\oplus
\nabla^{\ker(\DD^{Z, \dr}_0), \op}\big)-\frac{c(T)}{4\sqrt{t}}\bigg)\oplus\wh{d}^N.
\end{split}
\end{equation}
Note that \dis{\wh{\bbb}^{\dr}_t:=\sqrt{t}\wt{\DD}^{Z, \dr}_0(\alpha(t))+\big(\nabla^{\pi^\Lambda_*(kF), u}\oplus
\nabla^{\ker(\DD^{Z, \dr}_0), \op}\big)-\frac{c(T)}{4\sqrt{t}}} is a rescaled Bismut superconnection on $\pi^\Lambda_*(kF)\oplus\ker(\DD^{Z, \dr}_0)^{\op}\to B$. Write (\ref{eq 3.33}) as
\begin{equation}\label{eq 3.34}
i_{B, 0}^*\wh{\bbb}^{\Lambda, \f; \wt{V}}_t=\wh{\bbb}^{\dr}_t\oplus\wh{d}^N.
\end{equation}
Let $t<T\in(0, \infty)$. By (\ref{eq 3.34}), we have
\begin{equation}\label{eq 3.35}
\begin{split}
i_{B, 0}^*\CS\big(\wh{\bbb}^{\Lambda, \f; \wt{V}}_t, \wh{\bbb}^{\Lambda, \f; \wt{V}}_T\big)&\equiv\CS\big(i_{B, 0}^*\wh{\bbb}^{\Lambda, \f; \wt{V}}_t, i_{B, 0}^*\wh{\bbb}^{\Lambda, \f; \wt{V}}_T\big)\\
&\equiv\CS(\wh{\bbb}^{\dr}_t\oplus\wh{d}^N, \wh{\bbb}^{\dr}_T\oplus\wh{d}^N)\\
&\equiv\CS(\wh{\bbb}^{\dr}_t, \wh{\bbb}^{\dr}_T).
\end{split}
\end{equation}
Let $\bbb^{\dr}_t$ be the Bismut superconnection defined by \cite[(3.49)]{B05}. Then $\wh{\bbb}^{\dr}_t$ is associated to $\bbb^{\dr}_t$ as in (\ref{eq 2.9}). Now suppose $t$ satisfies $t<a$. As in (\ref{eq 2.11}), we have
\begin{equation}\label{eq 3.36}
\wh{\bbb}^{\dr}_t=\bbb^{\dr}_t\oplus\nabla^{\ker(\DD^{Z, \dr}_0), \op}.
\end{equation}
By \cite[(2.8), (2.6), (2.7)]{H23} and (\ref{eq 3.36}), we have
\begin{equation}\label{eq 3.37}
\begin{split}
&\CS(\wh{\bbb}^{\dr}_t, \wh{\bbb}^{\dr}_T)-\CS(\bbb^{\dr}_t, \bbb^{\dr}_T)\\
&\equiv\CS(\wh{\bbb}^{\dr}_t, \wh{\bbb}^{\dr}_T)-\CS(\bbb^{\dr}_t, \bbb^{\dr}_T)-\CS(\nabla^{\ker(\DD^{Z, \dr}_0), \op}, \nabla^{\ker(\DD^{Z, \dr}_0), \op})\\
&\equiv\CS(\wh{\bbb}^{\dr}_t, \wh{\bbb}^{\dr}_T)-\CS(\bbb^{\dr}_t\oplus\nabla^{\ker(\DD^{Z, \dr}_0), \op}, \bbb^{\dr}_T\oplus\nabla^{\ker(\DD^{Z, \dr}_0), \op})\\
&\equiv-\CS(\wh{\bbb}^{\dr}_T, \wh{\bbb}^{\dr}_t)-\CS(\bbb^{\dr}_t\oplus\nabla^{\ker(\DD^{Z, \dr}_0), \op}, \bbb^{\dr}_T\oplus\nabla^{\ker(\DD^{Z, \dr}_0), \op})\\
&\equiv-\CS(\wh{\bbb}^{\dr}_T, \bbb^{\dr}_T\oplus\nabla^{\ker(\DD^{Z, \dr}_0), \op}).
\end{split}
\end{equation}
Denote by $\wh{\eta}^{\dr}$ and $\wt{\eta}^{\dr}$ the Bismut--Cheeger eta forms associated to $\wh{\bbb}^{\dr}_t$ and to $\bbb^{\dr}_t$ as in (\ref{eq 2.13}), respectively. By letting $t\to 0$ and $T\to\infty$ in (\ref{eq 3.37}), we have
$$\wh{\eta}^{\dr}-\wt{\eta}^{\dr}=-\lim_{T\to\infty}\CS(\wh{\bbb}^{\dr}_T, \bbb^{\dr}_T\oplus\nabla^{\ker(\DD^{Z, \dr}_0), \op}).$$
By the estimates in \cite[\S9.3]{BGV}, we have
$$\lim_{T\to\infty}\CS(\wh{\bbb}^{\dr}_T, \bbb^{\dr}_T\oplus\nabla^{\ker(\DD^{Z, \dr}_0), \op})=0.$$
On the other hand, by \cite[Theorem 3.7]{B05} (see also Zhang \cite[Proposition 2.3]{Z04}), $\wt{\eta}^{\dr}=0$. Thus $\wh{\eta}^{\dr}\equiv 0$. By letting $t\to 0$ and $T\to\infty$ in (\ref{eq 3.35}), we have
$$i_{B, 0}^*\CS\big(\wh{\bbb}^{\Lambda, \f; \wt{V}}_t, \wh{\bbb}^{\Lambda, \f; \wt{V}}_T\big)=\wh{\eta}^{\dr}\equiv 0.$$
Thus (\ref{eq 3.32}) holds for $j=0$.
\end{proof}

Recall that the real part of the Cheeger--Chern--Simons class of a complex flat vector bundle $(F, \nabla^F)$ is defined as follows \cite[(2.44)]{MZ08}. Suppose $\rk(F)=\ell$. Let $g^F$ be a Hermitian metric on $F\to X$. Let $k\in\N$ satisfy $kF\cong k\C^\ell$. Define
\begin{equation}\label{eq 3.38}
\re(\CCS(F, \nabla^F))=\bigg[\frac{1}{k}\CS(\alpha^*d^{k\ell}, k\nabla^{F, u})\bigg]\mod\Q,
\end{equation}
where $\alpha:k\FF^u\to\CC^{k\ell}$ is a morphism. If $(F, \nabla^F)$ and $g^F$ are $\Z_2$-graded and $\rk(F^+)=\rk(F^-)$, then $\re(\CCS(F, \nabla^F))$ is defined to be
\begin{equation}\label{eq 3.39}
\re(\CCS(F, \nabla^F))=\bigg[\frac{1}{k}\CS(\alpha^*(k\nabla^{F, u, -}), k\nabla^{F, u, +})\bigg]\mod\Q,
\end{equation}
where $k\in\N$ satisfies $kF^+\cong kF^-$ and $\alpha:k\FF^{+, u}\to k\FF^{-, u}$ is a morphism. In both of these cases, $\re(\CCS(F, \nabla^F))$ does not depend on the choices of $k$ and $\alpha$ (see \cite[\S2.2]{H20}).

\subsection{The real part of the RRG for complex flat vector bundles}\label{s 3.2}

In this subsection, we deduce (\ref{eq 1.2}) from Theorem \ref{thm 2}.
\begin{thm}\label{thm 3}
Let $\pi:X\to B$ be a submersion with closed fibers $Z$, equipped with a Riemannian structure $\bm{\pi}$. For any complex flat vector bundle $(F, \nabla^F)$ over $X$ and a Hermitian metric $g^F$ on $F\to X$, we have
$$\re(\CCS(H(Z, F|_Z), \nabla^{H(Z, F|_Z)}))=\int_{X/B}e(T^VX)\cup\re(\CCS(F, \nabla^F))$$
in $H^{\odd}(B; \R/\Q)$.
\end{thm}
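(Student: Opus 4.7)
The plan is to derive Theorem \ref{thm 3} directly from Theorem \ref{thm 2}: divide both sides of (\ref{eq 3.1}) by $k$ and reduce modulo closed forms with $\Q$-periods, yielding an equality in $H^{\odd}(B; \R/\Q)$.

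First, I would handle the right-hand side. Since $\CS(\nabla_0, \nabla_1) \equiv -\CS(\nabla_1, \nabla_0)$ modulo exact forms, the definition (\ref{eq 3.38}) gives
$$\tfrac{1}{k}\CS(k\nabla^{F,u}, \alpha^*d^{k\ell}) \equiv -\re(\CCS(F, \nabla^F)) \pmod{\Q}$$
as a class in $H^{\odd}(X; \R/\Q)$. Because $e(\nabla^{T^VX})$ is a closed form with rational periods representing $e(T^VX) \in H^{\ast}(X; \Q)$, the cup product with a class modulo $\Q$ is well-defined, and integration over the fiber preserves this structure. Consequently, the RHS of (\ref{eq 3.1}) divided by $k$ represents $-\int_{X/B} e(T^VX) \cup \re(\CCS(F, \nabla^F))$ modulo $\Q$.

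Next, for the left-hand side, I would decompose using the $\Z_2$-grading $h = h^+ \oplus h^-$, where $h^\pm : kH^\pm(Z, F|_Z) \oplus \C^N \to \C^{km_\pm} \oplus \C^N$. The supertrace convention for the $\Z_2$-graded Chern--Simons form gives
\begin{align*}
&\CS\bigl(k\nabla^{H(Z,F|_Z),u} \oplus \wh{d}^N,\, h^*(d^{km} \oplus \wh{d}^N)\bigr)\\
&\equiv \CS\bigl(k\nabla^{H^{\even},u} \oplus d^N,\, (h^+)^*(d^{km_+} \oplus d^N)\bigr) - \CS\bigl(k\nabla^{H^{\odd},u} \oplus d^N,\, (h^-)^*(d^{km_-} \oplus d^N)\bigr),
\end{align*}
with the balanced contributions coming from $\wh{d}^N$ cancelling. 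Each remaining term, divided by $k$ and reduced modulo $\Q$, equals $-\re(\CCS(H^{\even}))$, respectively $-\re(\CCS(H^{\odd}))$, by (\ref{eq 3.38}). Here one invokes the standard fact, implicit in the well-definedness arguments of \cite[\S 2.2]{H20}, that adding a trivial stabilizing summand $\C^N$ does not change the $\re(\CCS)$ class modulo $\Q$: the corresponding ambiguity reduces to an automorphism of $\C^{km_\pm + N}$ whose $k^{-1}$-scaled odd Chern character has $\Q$-periods. Summing with the virtual index-bundle convention $\CCS(H(Z,F|_Z)) = \CCS(H^{\even}) - \CCS(H^{\odd})$ used in \cite{MZ08}, the LHS of (\ref{eq 3.1}) divided by $k$ represents $-\re(\CCS(H(Z, F|_Z), \nabla^{H(Z, F|_Z)}))$ in $H^{\odd}(B; \R/\Q)$.

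Equating the two sides of (\ref{eq 3.1}) modulo $\Q$ and cancelling the overall minus sign then yields the theorem. The hard part will be the stabilization step for the LHS, namely justifying that the additional $\wh{\C}^N$ summand---which is essential for the trivialization in Theorem \ref{thm 2} to exist---can be absorbed without altering the class modulo $\Q$. This requires careful tracking of the supertrace decomposition together with the sign conventions for $\CS$, but ultimately reduces to the well-definedness of $\re(\CCS)$ already established in the referenced literature.
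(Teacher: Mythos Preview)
Your proposal is correct and follows essentially the same route as the paper. The only organizational difference is how the ``stabilization'' step is handled: the paper makes it explicit by introducing an auxiliary $\Z$-graded morphism $\wt{h}:k\HH(Z,F|_Z)\to\CC^{km}$ (available from (\ref{eq 3.3})--(\ref{eq 3.4})), writing $f:=h\circ(\wt{h}^{-1}\oplus\id_{\wh{\CC}^N})\in\Aut(\CC^{km}\oplus\wh{\CC}^N)$, and identifying the discrepancy between the two Chern--Simons forms as the odd Chern character $\ch^{\odd}(f,d^{km}\oplus\wh{d}^N)$, which has rational periods and hence dies modulo $\Q$. Your ``stabilization'' argument, once unpacked, is exactly this computation carried out on each $\Z_2$-graded summand separately; the paper's version has the advantage of naming the obstruction form concretely rather than appealing to well-definedness. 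One small imprecision: your phrase ``the balanced contributions coming from $\wh{d}^N$ cancelling'' is misleading---nothing cancels at that stage; the $\C^N$ summands are simply carried along into each of $\CS^\pm$ and must be absorbed by the odd-Chern-character argument you describe afterward.
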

\begin{proof}
Write $\ell=\rk(F)$. By Theorem \ref{thm 2}, there exist sufficiently large $k, N\in\N$, a morphism $\alpha:k\FF^u\to\CC^{k\ell}$ and a $\Z_2$-graded morphism
$$h:k\HH(Z, F|_Z)\oplus\wh{\CC}^N\to\CC^{km}\oplus\wh{\CC}^N$$
such that
\begin{equation}\label{eq 3.40}
\CS\big(k\nabla^{H(Z, F|_Z), u}\oplus\wh{d}^N, h^*(d^{km}\oplus\wh{d}^N)\big)\equiv\int_{X/B}e(\nabla^{T^VX})\wedge\CS(k\nabla^{F, u}, \alpha^*d^{k\ell}).
\end{equation}

By (\ref{eq 3.3}) and (\ref{eq 3.4}), $kH(Z, F|_Z)\cong\C^{km}$ as $\Z$-graded complex vector bundles. Let
$$\wt{h}:k\HH(Z, F|_Z)\to\CC^{km}$$
be a $\Z$-graded morphism. Thus $(\wt{h}\oplus\id_{\wh{\CC}^N})^*(d^{km}\oplus\wh{d}^N)$ is a unitary connection on $kH(Z, F|_Z)\oplus\wh{\C}^N\to B$. Define $f:=h\circ(\wt{h}^{-1}\oplus\id_{\wh{\CC}^N})\in\Aut(\wh{\CC}^N\oplus\CC^{km})$. By \cite[(2.7), (2.8)]{H23}, the left-hand side of (\ref{eq 3.40}) is equal to
\begin{equation}\label{eq 3.41}
\begin{split}
&\CS\big(k\nabla^{H(Z, F|_Z), u}\oplus\wh{d}^N, h^*(d^{km}\oplus\wh{d}^N)\big)\\
&\equiv\CS\big(k\nabla^{H(Z, F|_Z), u}\oplus\wh{d}^N, (\wt{h}\oplus\id_{\wh{\CC}^N})^*(d^{km}\oplus\wh{d}^N)\big)\\
&\qquad+\CS\big((\wt{h}\oplus\id_{\wh{\CC}^N})^*(d^{km}\oplus\wh{d}^N), h^*(d^{km}\oplus\wh{d}^N)\big)\\
&\equiv\CS\big(k\nabla^{H(Z, F|_Z), u}, \wt{h}^*d^{km}\big)+\CS(\wh{d}^N, \wh{d}^N)+\CS\big(d^{km}\oplus\wh{d}^N, f^*(d^{km}\oplus\wh{d}^N)\big)\\
&\equiv\CS\big(k\nabla^{H(Z, F|_Z), u}, \wt{h}^*d^{km}\big)+\CS\big(d^{km}\oplus\wh{d}^N, f^*(d^{km}\oplus\wh{d}^N)\big).
\end{split}
\end{equation}
By the definition of odd Chern character form \cite[Definition 1.1]{G93}, we have
\begin{equation}\label{eq 3.42}
\ch^{\odd}(f, d^{km}\oplus\wh{d}^N)=\CS\big(d^{km}\oplus\wh{d}^N, f^*(d^{km}\oplus\wh{d}^N)\big).
\end{equation}
By (\ref{eq 3.41}), (\ref{eq 3.42}) and \cite[(2.6)]{H23}, (\ref{eq 3.40}) becomes
\begin{equation}\label{eq 3.43}
\begin{split}
&\CS\big(\wt{h}^*d^{km}, k\nabla^{H(Z, F|_Z), u}\big)-\ch^{\odd}(f, d^{km}\oplus\wh{d}^N)\\
&\equiv\int_{X/B}e(\nabla^{T^VX})\wedge\CS(\alpha^*d^{k\ell}, k\nabla^{F, u}).
\end{split}
\end{equation}
Since $\CS(\alpha^*d^{k\ell}, k\nabla^{F, u})$ and $\CS\big(\wt{h}^*d^{km}, k\nabla^{H(Z, F|_Z), u}\big)$ are differential form representatives of $\re(\CCS(F, \nabla^F))$ and $\re(\CCS(H(Z, F|_Z), \nabla^{H(Z, F|_Z)}))$ by (\ref{eq 3.38}) and (\ref{eq 3.39}), respectively, by taking the de Rham class of (\ref{eq 3.43}) and then modding it out by $\Q$, the result follows.
\end{proof}

Since the morphism $\wt{h}$ in the proof of Theorem \ref{thm 3} is $\Z$-graded, it follows that
\begin{equation}\label{eq 3.44}
\CS\big(\wt{h}^*d^{km}, k\nabla^{H(Z, F|_Z), u}\big)=\sum_{j=0}^n(-1)^j\CS\big(\wt{h}_j^*d^{km_j}, k\nabla^{H^j(Z, F|_Z), u}\big).
\end{equation}
By (\ref{eq 3.38}), the right-hand side of (\ref{eq 3.44}) is a differential form representative of
$$\sum_{j=0}^n(-1)^j\re(\CCS(H^j(Z, F|_Z), \nabla^{H^j(Z, F|_Z)})),$$
which is equal to $\re(\CCS(H(Z, F|_Z), \nabla^{H(Z, F|_Z)}))$ by \cite[Lemma 2]{H20}. Thus Theorem \ref{thm 3} becomes
$$\sum_{j=0}^n(-1)^j\re(\CCS(H^j(Z, F|_Z), \nabla^{H^j(Z, F|_Z)}))=\int_{X/B}e(T^VX)\cup\re(\CCS(F, \nabla^F)),$$
which is \cite[(1.4)]{MZ08}.

\bibliographystyle{amsplain}
\bibliography{MBib}
\end{document}